\newtheorem{theorem}{\sc Theorem}[section]
\newtheorem{lemma}[theorem]{\sc Lemma}
\newtheorem{corollary}[theorem]{\sc Corollary}
\begin{document}
\title[On finite-by-nilpotent groups]
{On finite-by-nilpotent groups}

\author[E. Detomi]{Eloisa Detomi}

\address{Dipartimento di Ingegneria dell'Informazione - DEI, Universit\`a di Padova, Via G. Gradenigo 6/B, 35121 Padova, Italy} 
\email{eloisa.detomi@unipd.it}

\author[G. Donadze]{Guram Donadze}
\address{ Department of Mathematics, University of Brasilia, Brasilia-DF, 70910-900 Brazil
and
Institute of Cybernetics of the Georgian Technical University, Sandro Euli Str. 5, 0186, Tbilisi, Georgia }
\email{gdonad@gmail.com}

\author[M. Morigi]{Marta Morigi}
\address{Dipartimento di Matematica, Universit\`a di Bologna, 
Piazza di Porta San Donato 5, 40126 Bologna, Italy}
\email{marta.morigi@unibo.it}

\author[P. Shumyatsky]{Pavel Shumyatsky}
\address{Department of Mathematics, University of Brasilia,
Brasilia-DF,   70910-900 Brazil }
\email{pavel2040@gmail.com}

\thanks{The first and third authors are members of INDAM. The fourth author was supported by CNPq-Brazil}
\keywords{Conjugacy classes, commutators}
\subjclass[2010]{20E45; 20F12; 20F24.}

\begin{abstract}
\noindent  Let $\gamma_n=[x_1,\dots,x_n]$ be the $n$th lower central word. Denote by $X_n$ the set of $\gamma_n$-values in a group $G$ and suppose that there is a number $m$ such that $|g^{X_n}|\leq m$ for each $g\in G$. We prove that $\gamma_{n+1}(G)$ has finite $(m,n)$-bounded order. This generalizes the much celebrated theorem of B. H. Neumann that says that the commutator subgroup of a BFC-group is finite.
\end{abstract}
\maketitle

\section{Introduction}

Given a group $G$ and an element $x\in G$, we write $x^G$ for the conjugacy class containing $x$. Of course, if the number of elements in $x^G$ is finite, we have $|x^G|=[G:C_G(x)]$. A group is said to be a BFC-group if its conjugacy classes are finite and of bounded size. One of the most famous of B. H. Neumann's theorems says that in a BFC-group the commutator subgroup $G'$ is finite \cite{bhn}. It follows that if $|x^G|\leq m$ for each $x\in G$, then the order of $G'$ is bounded by a number depending only on $m$. A first explicit bound for the order of $G'$ was found by J. Wiegold \cite{wie}, and the best known was obtained in 
\cite{gumaroti} (see also \cite{neuvoe} and \cite{sesha}).

The recent articles \cite{dieshu} and \cite{dms17} deal with groups $G$ in which conjugacy classes containing commutators are bounded. Recall that  multilinear commutator words are words which are obtained by nesting commutators, but using always different variables. More formally, the group-word $w(x)=x$ in one variable is a multilinear commutator; if $u$ and $v$ are  multilinear commutators involving different variables then the word $w=[u,v]$ is a multilinear commutator, and all multilinear commutators are obtained in this way. Examples of multilinear commutators include the familiar lower central words $\gamma_n(x_1,\dots,x_n)=[x_1,\dots,x_n]$ and derived words $\delta_n$, on $2^n$ variables, defined recursively by
$$\delta_0=x_1,\qquad \delta_n=[\delta_{n-1}(x_1,\ldots,x_{2^{n-1}}),\delta_{n-1}(x_{2^{n-1}+1},\ldots,x_{2^n})].$$ We let $w(G)$ denote the verbal subgroup of $G$ generated by all $w$-values. 
Of course, $\gamma_n(G)$ is the $n$th term of the lower central series of $G$ while $\delta_n(G)=G^{(n)}$ is the $n$th  term of the derived series.

The following theorem was established in \cite{dms17}.

\begin{theorem}\label{17}
Let $m$ be a positive integer and $w$ a multilinear commutator word. Suppose that $G$ is a group in which $|x^G|\leq m$ for any $w$-value $x$. Then the order of the commutator subgroup of $w(G)$ is finite and $m$-bounded. 
\end{theorem}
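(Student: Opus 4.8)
The plan is to reduce the statement to B.~H.~Neumann's theorem \cite{bhn} applied to the verbal subgroup $w(G)$ itself. I begin by recording a few properties of a multilinear commutator word $w$ that are used throughout: the set $W$ of $w$-values of $G$ is closed under conjugation and under inversion, so that $w(G)=\langle W\rangle\trianglelefteq G$; and, writing $w=[u,v]$ with $u$ and $v$ multilinear commutators in disjoint sets of variables, for every $w$-value $x$ and every $g\in G$ the commutator $[x,g]=x^{-1}x^{g}$ is a product of two $w$-values, since $x^{-1}$ and $x^{g}$ are again $w$-values. Consequently $|[x,g]^{G}|\le m^{2}$ for every $w$-value $x$, and $w(G)'$ is generated by the conjugation-closed set $\{[c,c']:c,c'\in W\}$, each element of which is a product of two $w$-values and hence has at most $m^{2}$ conjugates in $G$. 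Next I carry out the standard reductions: since an element of $w(G)'$ involves only finitely many elements of $G$, and since the hypothesis and the conclusion are inherited by subgroups and by quotients, it is enough to bound $|w(G)'|$ for $G$ finitely generated; and a further reduction --- using residual finiteness of the relevant relatively free groups, or arguing that $w(G)'$ is locally finite --- lets us assume $G$ finite.

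The core of the argument, and the step I expect to be the main obstacle, is to prove that $w(G)$ is a BFC-group: that $|y^{G}|$ is $m$-bounded for \emph{every} $y\in w(G)$, not merely for the $w$-values. This is precisely where the word structure must be used, because for an arbitrary normal subset $X$ with $|x^{G}|\le m$ for all $x\in X$ the subgroup $\langle X\rangle'$ need not be finite at all --- in a restricted direct product of copies of $S_{3}$ the elements having boundedly many conjugates already generate a subgroup with infinite commutator subgroup. Since for a $w$-value $c$ the centralizer $C_{G}(c)$ has index at most $m$, its core is a normal subgroup of $G$ of $m$-bounded index centralizing $c$; the task is to combine $m$-boundedly many of these cores, exploiting the fact that both $w(G)$ and $w(G)'$ are generated by conjugation-closed sets of elements of $m$-bounded conjugacy-class size together with B.~H.~Neumann's lemma on coverings by cosets (which controls products of $w$-values), so as to produce a subgroup of $G$ of $m$-bounded index that centralizes $w(G)$ modulo a normal subgroup of $m$-bounded order. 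For finite $G$ I would organise this by peeling off $O_{p'}(G)$ for the relevant primes and using coprime action to reduce to the case where $w(G)$ is a $p$-group, in which the index of $C_{G}(w(G))$ (modulo a bounded subgroup) can be bounded directly. Granting this, every $y\in w(G)$ is centralized by a subgroup of $G$ of $m$-bounded index, so $|y^{G}|$, and therefore $|y^{w(G)}|$, is $m$-bounded.

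Finally, with $w(G)$ shown to be a BFC-group whose conjugacy classes have $m$-bounded size, B.~H.~Neumann's theorem \cite{bhn}, in the quantitative form recalled in the Introduction, applied to the group $w(G)$, gives that $w(G)'$ is finite of $m$-bounded order --- which is the assertion. All of the difficulty is thus concentrated in the middle step, the transfer of the conjugacy-class bound from the generating set of $w$-values to all of $w(G)$; the identities rewriting products and commutators of $w$-values again as products of boundedly many $w$-values, Neumann's coset-covering lemma, and the reduction to $p$-groups are the tools I would use to carry it out.
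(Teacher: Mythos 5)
First, a point of orientation: this paper does not prove Theorem \ref{17} at all --- it is quoted from \cite{dms17} --- so your proposal has to be judged against the method of \cite{dms17}, of which the proof of Theorem \ref{main} in Section 2 here is a faithful sample. Your argument has a genuine gap at exactly the point you yourself flag as ``the main obstacle'': you never prove that $w(G)$ is a BFC-group with $m$-bounded classes; you only list tools you would hope to use (combining cores of centralizers, Neumann's covering lemma, passing to quotients by $O_{p'}$ and coprime action). That intermediate statement --- $|y^{G}|$ is $m$-bounded for \emph{every} $y\in w(G)$, not just for $w$-values --- is strictly stronger than the theorem (it gives the conclusion instantly via Neumann), it is established nowhere in \cite{dieshu} or \cite{dms17}, and nothing you set up supports it: an element of $w(G)$ is a product of an unbounded number of $w$-values, so your preliminary observations only yield $|y^{G}|\le m^{r}$ with $r$ the $w$-width of $y$, which is useless. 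The published proofs are structured precisely so as to avoid this claim. They bound $|g^{K}|$ only for $g$ a $w$-value and $K$ a suitable verbal subgroup (the analogue of Lemma \ref{conj2}); they bound $|[W,x]|$ for a single element $x$ by applying Schur's theorem to a subgroup $T$ generated by boundedly many elements of bounded length with respect to the set of $w$-values (Lemma \ref{2.3}); and then, instead of bounding all classes, they choose a value $a$ with $|a^{W}|$ maximal, produce a subgroup $M$ of bounded index with $[W,M]\le [W,a]$ by Wiegold's counting trick, and write $[W,K]$ as a product of boundedly many subgroups of bounded order (Lemma \ref{basic-light}). None of this passes through BFC-ness of $w(G)$.

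A second, independent problem is your reduction to finite groups. The class of groups satisfying $|x^{G}|\le m$ for all $w$-values is not a variety, so there are no ``relevant relatively free groups'' whose residual finiteness you could invoke; and ``arguing that $w(G)'$ is locally finite'' is essentially the conclusion you are trying to prove. A finitely generated group satisfying the hypothesis is not known to be residually finite at that stage of the argument; in the classical BFC case the analogous reduction becomes available only \emph{after} one knows qualitatively that $G'$ is finite and the centre has finite index. The actual proof works directly in arbitrary groups via the covering arguments above. What is sound in your write-up: the closure of the set of $w$-values under inversion and conjugation, the bound $|[x,g]^{G}|\le m^{2}$, the generation of $w(G)'$ by the conjugation-closed set of commutators of $w$-values, the reduction to finitely generated $G$, and the final appeal to Neumann's theorem \emph{granted} the BFC claim. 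But the entire content of the theorem sits in the step you left as a plan.
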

Throughout the article we use the expression ``$(a,b,\dots)$-bounded" to mean that a quantity is finite and bounded by a certain number depending only on the parameters $a,b,\dots$.

The present article grew out of the observation that a modification of the techniques developed in \cite{dieshu} and \cite{dms17} can be used to deduce that if $|x^{G'}|\leq m$ for each $x\in G$, then $\gamma_3(G)$ has finite $m$-bounded order. Naturally, one expects that a similar phenomenon holds for other terms of the lower central series of $G$. This is indeed the case.  

\begin{theorem}\label{main} Let $m,n$ be positive integers and $G$ a group. If $|x^{\gamma_n(G)}|\le m$ for any $x \in G$, then $\gamma_{n+1}(G)$ has finite $(m,n)$-bounded order. 
\end{theorem}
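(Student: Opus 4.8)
The plan is to carry out a sequence of standard reductions and then confront the combinatorial core that survives them. I would first reduce to the case where $G$ is finitely generated, since $\gamma_{n+1}(G)$ is the directed union of the subgroups $\gamma_{n+1}(H)$, $H$ ranging over the finitely generated subgroups of $G$, the hypothesis passes to subgroups, and a directed union of subgroups of order at most $C$ again has order at most $C$. Writing $N=\gamma_n(G)$ and taking $x\in N$ in the hypothesis, $N$ is a BFC-group with bound $m$, so by B. H. Neumann's theorem \cite{bhn} the order of $N'$ is $m$-bounded; since $N'$ is characteristic in $N\trianglelefteq G$ and $N'=[\gamma_n(G),\gamma_n(G)]\le\gamma_{n+1}(G)$, it suffices to bound $|\gamma_{n+1}(G/N')|$, and so we may assume $N$ is abelian. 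Then, for each $g\in G$, the map $\nu\mapsto[\nu,g]$ is a homomorphism of $N$ onto $[N,g]$ with kernel $C_N(g)$, so $[N,g]$ is a subgroup of $N$ of order $[N:C_N(g)]=|g^N|\le m$; hence $\gamma_{n+1}(G)=[N,G]$ is the sum of the finite subgroups $[N,g]$, and in particular it is periodic of $m$-bounded exponent. Splitting it into primary components and reducing to elementary abelian sections, I would further assume that $V:=\gamma_{n+1}(G)$ is an elementary abelian $p$-group for some $m$-bounded prime $p$, i.e.\ an $\mathbb{F}_pG$-module; then each $[N,g]$ has $\mathbb{F}_p$-dimension at most $\log_p m$, one has $V=\sum_{g\in G}[N,g]$, and everything reduces to bounding $\dim_{\mathbb{F}_p}V$ in terms of $m$ and $n$.

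For this last and decisive step I would argue by induction on $n$, the case $n=1$ being Neumann's theorem itself. The structure now at my disposal is: $G$ centralizes $N/V$, so $N$ is a $\mathbb{Z}[G/V]$-module with $V=[N,G]$; the subgroup $[N,g]$ depends only on the coset $gN$; $G/N$ is nilpotent of class at most $n-1$ (and $G/V$ of class at most $n$); and $[N,g]^h=[N,g^h]$, so $\bigcup_{g\in G}[N,g]$ is a conjugation-closed set of elements of $m$-bounded order. The idea is to peel off the outermost term of the lower central series, using $N=[\gamma_{n-1}(G),G]$, in order to feed a suitable section of $G$ into the inductive hypothesis, and thus to prove that $V=\sum_g[N,g]$ is already spanned by a bounded number of the subgroups $[N,g]$. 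Granting that, Dietzmann's lemma applied to the finite conjugation-closed set $\bigcup_{g}[N,g]$ of bounded-order elements shows that $V$, and hence $\gamma_{n+1}(G)$, is finite of $(m,n)$-bounded order.

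The hard part is precisely this bounded-generation statement, and it is where a genuine modification of the arguments of \cite{dieshu} and \cite{dms17} is needed. The naive approach — intersecting the centralizers $C_N(g)$, each of index at most $m$ in $N$, to obtain a bounded-index subgroup centralized by all of $G$ — is not available, since an intersection of index-$\le m$ subgroups of an infinite abelian group can have infinite index. One is forced to use that the hypothesis bounds $|x^N|$ for \emph{every} $x\in G$, not just for the elements $g$ under consideration: applied to well-chosen products of such elements, and to commutators lying in $N$ (abundant because $N=\gamma_n(G)$ is generated by $\gamma_n$-values), this produces enough collapses among the subgroups $[N,g]$ to prevent $V=\sum_g[N,g]$ from having unbounded dimension. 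Making this quantitative and dovetailing it with the induction on $n$ is, I expect, the real content of the proof; the reductions above and the final appeal to Dietzmann's lemma are routine.
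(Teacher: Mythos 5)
Your preliminary reductions (restriction to finitely generated $G$, applying Neumann's theorem to $N=\gamma_n(G)$ so as to assume $N$ abelian, observing that each $[N,g]$ is then a subgroup of order at most $m$, primary decomposition, and the final appeal to Dietzmann's lemma) are all correct, but the proposal stops exactly where the proof has to begin. Everything is reduced to the claim that $V=\sum_{g\in G}[N,g]$ is spanned by boundedly many of the subgroups $[N,g]$, and that claim is then explicitly deferred (``Granting that\dots'', ``I expect [this] is the real content of the proof''). That bounded-generation statement \emph{is} the theorem: none of the reductions exploits the hypothesis for elements outside the subgroups under consideration, and your closing paragraph only gestures at how one might use $|x^{\gamma_n(G)}|\le m$ for products and for $\gamma_n$-values without producing an actual argument. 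As it stands this is a plan, not a proof.

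For comparison, the paper does not induct on $n$ by ``peeling off the outermost term''; it fixes $W=\gamma_n(G)$ and runs a backward induction on $k$, showing that $[\gamma_k(G),W]$ is finite of $(m,n)$-bounded order for $k=n,n-1,\dots,1$, the base case $k=n$ being Neumann's theorem and the case $k=1$ giving $\gamma_{n+1}(G)$. The engine of the inductive step (Lemma \ref{basic-light}) is a Wiegold-type maximal-class argument, which is precisely the missing bounded-generation device: after factoring out the finite subgroup $[\gamma_{k+1}(G),W]$, one chooses $a\in X_{k}$ with $|a^{W}|=r$ maximal among $\gamma_{k}$-values, realizes $a^W$ by conjugators $b_1,\dots,b_r$ that are products of at most $m-1$ elements of $X_n$ (Lemma \ref{2.1g}), and passes to the normal core $M$ of $C_{\gamma_{k}(G)}(\langle b_1,\dots,b_r\rangle)$ in $\gamma_k(G)$, which has bounded index by a Hall--Witt computation (Lemma \ref{conj2}). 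For $v\in M$ the $r$ distinct elements $va^{b_i}$ must exhaust the class $(va)^W$ by maximality of $r$, and a short computation then gives $[W,v]^a\le[W,a]$, hence $[W,M]\le[W,a]$; thus $[W,\gamma_k(G)]$ is a product of boundedly many subgroups $[W,x]$, each of bounded order by a Schur-theorem argument (Lemma \ref{2.3}). Note that this scheme never reduces to $N$ abelian or to elementary abelian modules; the finiteness of $[\gamma_{k+1}(G),W]$ from the previous step plays the role that $N'=1$ plays in your sketch. Unless you supply an argument of this kind (or a genuinely different one) for your spanning claim, the proposal has a gap at its decisive step.
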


Using the concept of verbal conjugacy classes, introduced in \cite{dfs}, one can obtain a generalization of Theorem \ref{main}. Let $X_n=X_n(G)$ denote the set of $\gamma_n$-values in a group $G$. It was shown in \cite{brakrashu} that if $|x^{X_n}|\le m$ for each $x\in G$, then $|x^{\gamma_n(G)}|$ is $(m,n)$-bounded. Hence, we have

\begin{corollary}\label{gamma} Let $m,n$ be positive integers and $G$ a group. If $|x^{X_n(G)}|\le m$ for any $x \in G$, then $\gamma_{n+1}(G)$ has finite $(m,n)$-bounded order. 
\end{corollary}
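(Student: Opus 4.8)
The plan is to derive Corollary \ref{gamma} directly from Theorem \ref{main} together with the reduction established in \cite{brakrashu}, so the argument is essentially a two–line composition of known results. The only conceptual point worth isolating is that the hypothesis $|x^{X_n(G)}|\le m$ is \emph{a priori} weaker than a bound on $|x^{\gamma_n(G)}|$: the verbal conjugacy class $x^{X_n(G)}$ records only the conjugates of $x$ by individual $\gamma_n$-values, whereas $\gamma_n(G)$ is the subgroup \emph{generated} by such values and will in general contain elements that are not themselves $\gamma_n$-values. Bridging exactly this gap is the content of the theorem of \cite{brakrashu}, which I would quote as a black box.

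Concretely, I would argue as follows. Assume $|x^{X_n(G)}|\le m$ for every $x\in G$. By the cited result of \cite{brakrashu} there is a number $m_1$, depending only on $m$ and $n$, such that $|x^{\gamma_n(G)}|\le m_1$ for every $x\in G$. Now apply Theorem \ref{main} with $m_1$ playing the role of $m$: it yields that $\gamma_{n+1}(G)$ is finite of order bounded by a function of $m_1$ and $n$. Since $m_1$ is itself $(m,n)$-bounded, this bound on $|\gamma_{n+1}(G)|$ is a composition of functions of $m$ and $n$, hence $(m,n)$-bounded, which is precisely the assertion.

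I do not expect any genuine obstacle here: all the real difficulty sits in Theorem \ref{main}, proved earlier in the paper, and in the reduction of \cite{brakrashu}. The only thing to be careful about is the bookkeeping of the constants — namely checking that replacing $m$ by the $(m,n)$-bounded quantity $m_1$ in Theorem \ref{main} still produces an $(m,n)$-bounded conclusion — and this is immediate since the class of $(m,n)$-bounded quantities is closed under composition with further $(m,n)$-bounded functions.
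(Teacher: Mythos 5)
Your proposal is correct and is exactly the paper's own argument: the authors likewise invoke Theorem 1.2 of \cite{brakrashu} to convert the bound on $|x^{X_n(G)}|$ into an $(m,n)$-bound on $|x^{\gamma_n(G)}|$ and then apply Theorem \ref{main}. The bookkeeping point about composing $(m,n)$-bounded functions is fine and needs no further justification.
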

Observe that Neumann's theorem can be obtained from Corollary \ref{gamma} by specializing $n=1$. Another result which is straightforward from Corollary \ref{gamma} is the following characterization of finite-by-nilpotent groups.
\begin{theorem}\label{char} A group $G$ is finite-by-nilpotent if and only if there are positive integers $m,n$ such that $|x^{X_n}|\le m$ for any $x\in G$.
\end{theorem}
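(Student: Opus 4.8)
I would prove Theorem \ref{char} by deriving each implication from results already available. For the ``if'' direction, assume there are positive integers $m,n$ with $|x^{X_n}|\le m$ for every $x\in G$. By Corollary \ref{gamma}, $\gamma_{n+1}(G)$ is finite of $(m,n)$-bounded order. But a group in which some term $\gamma_{n+1}(G)$ of the lower central series is finite is, by definition, finite-by-nilpotent (indeed $G/\gamma_{n+1}(G)$ is nilpotent of class at most $n$). So this direction is essentially immediate once Corollary \ref{gamma} is in hand.

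For the ``only if'' direction, suppose $G$ is finite-by-nilpotent, so there is a finite normal subgroup $N$ with $G/N$ nilpotent of class $c$, say. I want to produce $m$ and $n$ such that $|x^{X_n}|\le m$ for all $x\in G$. The natural choice is $n=c+1$: since $G/N$ has class $c$, every $\gamma_{c+1}$-value of $G$ lies in $N$, so $X_{c+1}(G)\subseteq N$. Hence for any $x\in G$, the orbit $x^{X_{c+1}}$ is contained in the coset $xN$, which has size $|N|$; thus $|x^{X_{c+1}}|\le |N|=:m$. Taking $n=c+1$ and $m=|N|$ gives the required bound.

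**Main obstacle.** There is essentially no obstacle here: the theorem is a clean corollary of the machinery already established, and the proof is a two-line argument in each direction. The only point requiring the slightest care is checking that $x^{X_n}$ — the orbit of $x$ under the action by conjugation of the \emph{set} $X_n$ of $\gamma_n$-values (not the subgroup it generates) — is indeed contained in $xN$ when $X_n\subseteq N$: this holds because $x^g = x[x,g]$ and $[x,g]\in N$ whenever $g\in N\trianglelefteq G$. I would present the proof in essentially the form above, perhaps a single short paragraph invoking Corollary \ref{gamma} for one direction and the coset observation for the other.
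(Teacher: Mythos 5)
Your proof is correct and follows essentially the same route as the paper, which simply invokes Corollary \ref{gamma} together with the fact that a group is finite-by-nilpotent if and only if some term of its lower central series is finite. Your version merely spells out the coset argument for the ``only if'' direction (working directly with the finite normal subgroup $N$ rather than with $\gamma_{c+1}(G)$), which the paper leaves as self-evident.
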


\section{Preliminary results}
Recall that in any group $G$ the following ``standard commutator identities" hold, when $x,y,z\in G$.

\begin{enumerate}
\item $[xy,z]=[x,z]^y[y,z]$ 
\item $[x,yz]=[x,z][x,y]^z$ 
\item $[x,y^{-1},z]^y[y,z^{-1},x]^z [z,x^{-1},y]^x=1$  (Hall-Witt identity); 
\item $[x,y,z^x][z,x,y^z][y,z,x^y]=1.$
\end{enumerate}
Note that the fourth identity follows from the third one. Indeed, we have
$$[x^y,y^{-1},z^y] [y^z,z^{-1},x^z][z^x,x^{-1},y^x]=1.$$ 
Since $[x^y,y^{-1}]= [y,x]$, it follows that
$$[y,x,z^y] [z,y,x^z][x,z,y^x]=1.$$ 

Recall that $X_i$ denote the set of $\gamma_i$-values in a group $G$.

\begin{lemma}\label{conj2} 
Let $k,n$ be integers with $2\le k\le n$ be integers and let $G$ be a group such that $[\gamma_k(G), \gamma_{n}(G)]$ is finite and $|x^{\gamma_n(G)}|\le m$ for any $x\in G$. Then for every $g\in X_n$ we have
 \[ |g^{\gamma_{k-1}(G)}|\le  m^{n-k+2}|[\gamma_k(G),\gamma_{n}(G)]|. \]
\end{lemma}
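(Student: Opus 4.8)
The plan is to reduce to the case where $N:=[\gamma_k(G),\gamma_n(G)]$ is trivial and then, in that case, to express for $g\in X_n$ and $h\in\gamma_{k-1}(G)$ the element $[g,h]$ as a product of at most $n-k+2$ commutators of a very restricted shape, each of which takes at most $m$ values as $h$ ranges over $\gamma_{k-1}(G)$.

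First I would pass to $\bar G=G/N$. Since $N$ is finite and normal in $G$, the hypothesis $|x^{\gamma_n(G)}|\le m$ is inherited by $\bar G$, the image of $g$ is again a $\gamma_n$-value, and every fibre of the natural surjection $g^{\gamma_{k-1}(G)}\to\bar g^{\gamma_{k-1}(\bar G)}$ has at most $|N|$ elements; hence $|g^{\gamma_{k-1}(G)}|\le|N|\cdot|\bar g^{\gamma_{k-1}(\bar G)}|$, and it suffices to prove the bound $|g^{\gamma_{k-1}(G)}|\le m^{\,n-k+2}$ under the extra assumption $[\gamma_k(G),\gamma_n(G)]=1$. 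Under this assumption $\gamma_n(G)$ is abelian (being contained in $\gamma_k(G)$) and is centralized by $\gamma_k(G)$, hence by $\gamma_j(G)$ for every $j\ge k$; moreover for $h\in\gamma_{k-1}(G)$ one has $[g,h]\in[\gamma_n(G),\gamma_{k-1}(G)]\le\gamma_{n+1}(G)\le\gamma_n(G)$, so that $|g^{\gamma_{k-1}(G)}|=\bigl|\{[g,h]:h\in\gamma_{k-1}(G)\}\bigr|$.

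The elementary ingredient is that, for a fixed $y\in G$, each of the maps $c\mapsto[c,y]$ and $c\mapsto[y,c]$ defined on $\gamma_n(G)$ takes at most $m$ values: indeed $[c_1,y]=[c_2,y]$ exactly when $c_1c_2^{-1}\in C_{\gamma_n(G)}(y)$, so the image is in bijection with the coset space $\gamma_n(G)/C_{\gamma_n(G)}(y)$, which has size $|y^{\gamma_n(G)}|\le m$, and likewise for the other map. Call a function $h\mapsto[c(h),y]$ or $h\mapsto[y,c(h)]$, with $y\in G$ fixed and $c(h)\in\gamma_n(G)$, an \emph{atom}; an atom takes at most $m$ values as $h$ runs over $\gamma_{k-1}(G)$. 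With $g=[g_1,\dots,g_n]$, the heart of the argument is then a commutator-calculus computation, using the identities (1)--(4) together with the commuting relations $[\gamma_n(G),\gamma_j(G)]=1$ ($j\ge k$) and $[\gamma_n(G),\gamma_n(G)]=1$, showing that $[g,h]$ can be rewritten inside $\gamma_n(G)$ as a product of at most $n-k+2$ atoms. Concretely, one peels off the variables $g_n,g_{n-1},\dots$ of $g$ one at a time: from a commutator $[v,D]$ with $v=[g_1,\dots,g_j]$ ($j\ge k$, so $v$ is centralized by $\gamma_n(G)$) and $D$ an iterated bracket built from $h$ and some of the $g_i$, one extracts via the Hall--Witt identity a factor of the form $[[g_1,\dots,g_{j-1},D],\,g_j]$ — which is an atom, since $[g_1,\dots,g_{j-1},D]$ lies in $\gamma_n(G)$ — while the remaining terms are of the same shape with $D$ replaced by a bracket of strictly higher weight; after $n-k+1$ such steps the inner bracket has sunk into $\gamma_n(G)$ and the leftover commutator $[g_1,\dots,g_{k-1},D]$ is itself an atom. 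Combining, $[g,h]$ is a product of $n-k+2$ atoms, so $\bigl|\{[g,h]:h\in\gamma_{k-1}(G)\}\bigr|\le m^{\,n-k+2}$, which finishes the reduced case and hence the lemma.

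I expect the commutator bookkeeping in this last step to be the main obstacle: one must organize the peeling and the applications of the Hall--Witt identity so that \emph{every} auxiliary commutator that appears either cancels, or lands in the abelian subgroup $\gamma_n(G)$ where it may be freely reorganized and recognized as an atom, and one must track the weights of the iterated inner brackets precisely enough to see that exactly $n-k+2$ atoms are produced — the rest of the proof is essentially bookkeeping-free.
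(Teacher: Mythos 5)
Your strategy coincides with the paper's: reduce to the case $[\gamma_k(G),\gamma_n(G)]=1$, then peel the entries of $g$ one at a time via the Hall--Witt identity so that $[g,h]$ becomes a product of $n-k+2$ commutators, each with one entry fixed and the other ranging over $\gamma_n(G)$ (your ``atoms''), hence each contributing a factor of at most $m$. The reduction, the atom principle, and the weight bookkeeping (the inner bracket $[g_1,\dots,g_{j-1},D]$ has weight $n+k-2\ge n$ at every stage, so the extracted factor really is an atom) are all correct, and the count $n-k+2$ is the right one.

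The gap is exactly where you flag it: the Hall--Witt manipulation is asserted rather than performed, and the clause ``the remaining terms are of the same shape with $D$ replaced by a bracket of strictly higher weight'' is the entire content of the lemma, not a detail. The paper avoids the unrolled bookkeeping by phrasing the peeling as an induction on a single parameter: for $k-1\le d\le n$ and every $\gamma_{n-d+1}$-value $g=[x,y]$ (with $x$ a $\gamma_{n-d}$-value) it proves $|g^{\gamma_d(G)}|\le m^{n-d+1}$ by induction on $n-d$. One application of identity (4) to $x,y,z$ with $z\in\gamma_d(G)$, together with the observations $[z,x]\in\gamma_n(G)$ and $[y,z]\in\gamma_{d+1}(G)\le\gamma_k(G)$ (which give $[z,x,y^z]=[z,x,y]$), yields
\[
[x,y]^{z^x}=[x,y]\,(x^{-1})^y\,(x^y)^{[y,z]}\,y^{-1}\,y^{[z,x]} .
\]
Here $y^{[z,x]}$ is one of your atoms (at most $m$ values), while $(x^y)^{[y,z]}$ is \emph{not} treated as an atom but is handled by the inductive hypothesis applied to the $\gamma_{n-d}$-value $x^y$ and the conjugator $[y,z]\in\gamma_{d+1}(G)$, giving at most $m^{n-d}$ values; since $z\mapsto z^x$ permutes $\gamma_d(G)$, this bounds $|[x,y]^{\gamma_d(G)}|$ by $m^{n-d+1}$, and $d=k-1$ gives the lemma because every $\gamma_n$-value is a $\gamma_{n-k+2}$-value. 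This inductive formulation disposes automatically of the auxiliary conjugates that worry you (they are absorbed into ``$x^y$ is again a $\gamma_{n-d}$-value''). As written, your proposal is a correct plan with the decisive computation missing; recasting the final step in this inductive form would complete it.
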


\begin{proof} Let $N=[\gamma_k(G), \gamma_{n}(G)]$. It is sufficient to prove that in the quotient group $G/N$, for every integer $d$ with  $k-1 \le d\le n$
 \[ |(gN)^{\gamma_{d}(G/N)}| \le m^{n-d+1} \quad  \textrm{for every
  $\gamma_{n-d+1}$-value $gN \in G/N$},\] 
since this implies that $g^{\gamma_{d}(G)}$ is contained at most 
 $m^{n-d+1}$ cosets of $N$, whenever $g \in X_{{n-d+1}}$.
  
So in what follows we assume that $N=1$. The proof is by induction on $n-d$. The case $d=n$ is immediate from the hypotheses. 
 
Let $c=n-d+1$. Choose $g\in X_c$ and write $g=[x,y]$ with $x\in X_{c-1}$ and $y\in G$. Let $z \in\gamma_d(G)$. We have \[ [x,y,z^x][z,x,y^z][y,z,x^y]=1. \]
 Note that 
 \[[z,x]\in [\gamma_{d}(G),\gamma_{c-1}(G)] \le \gamma_{d-1+c}(G)=\gamma_n(G)\]
and 
\[ [y,z] \in \gamma_{d+1}(G) \le \gamma_k(G),\]
whence $[z,x,y^z]=[z,x,y [y,z]] = [z,x,y].$ 
Thus,
\begin{eqnarray*}
 1= [x,y,z^x][z,x,y^z][y,z,x^y] 
  &=& [x,y]^{-1}[x,y]^{z^x}[z,x,y][y,z,x^y]  \\
  &=& [x,y]^{-1}[x,y]^{z^x} (y^{-1})^{[z,x]} y ((x^y)^{-1})^{[y,z]}x^y.   
\end{eqnarray*}
It follows that
 \[ [x,y]^{z^x} = [x,y](x^{-1})^y(x^y)^{[y,z]}y^{-1} y^{[z,x]}.\]
 Since $x^y \in X_{c-1}$ and $[y,z] \in \gamma_{d+1}(G)$, by induction  
 $$|\{(x^y)^{[y,z]} \mid z \in \gamma_d(G)\}| \le m^{n-d-1+1}.$$ Moreover, 
 $[z,x] \in \gamma_n(G)$ an so $|\{y^{[z,x]} \mid z\in \gamma_d(G)\}| \le m$. 
 Thus,
 \[ |\{[x,y]^{z^x}\mid  z \in \gamma_d(G)\}|=
  |\{[x,y]^{z}\mid  z \in \gamma_d(G)\} |\le m m^{n-d}=m^{n-d+1} 
 \]
as claimed. 
 \end{proof}

Let $H$ be a group generated by a set $X$ such that $X=X^{-1}$. Given
an element $g\in H$,
 we write $l_X(g)$ for the minimal number $l$ with the
property that $g$ can be written as a product of $l$ elements of $X$. Clearly, $l_X(g)=0$ if and only if $g=1$. We call $l_X(g)$ the length of $g$ with respect to $X$. The following result is Lemma 2.1 in \cite{dieshu}.

\begin{lemma}\label{2.1g}  Let $H$ be a group generated by a set $X=X^{-1}$ and let $K$ be a subgroup of finite index $m$ in $H$. Then each coset $Kb$ contains an element $g$ such that $l_X(g)\le m-1$.
\end{lemma}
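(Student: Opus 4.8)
The plan is to run a breadth-first search on the coset space $K\backslash H$ with respect to the generating set $X$. For each $i\ge 0$ let $S_i$ be the set of right cosets $Kc$ that contain at least one element $g$ with $l_X(g)\le i$. Since $l_X(1)=0$ and $1\in K$, we have $S_0=\{K\}$, and trivially $S_0\subseteq S_1\subseteq S_2\subseteq\cdots$. The first step is to record the recursion governing this chain: because $X=X^{-1}$ and $X$ generates $H$, every element of length $\le i+1$ and distinct from $1$ can be written as $gx$ with $l_X(g)\le i$ and $x\in X$, while conversely any such product has length $\le i+1$. Passing to cosets (using that $Kg=Kh$ implies $Kgx=Khx$) this gives
\[ S_{i+1}=S_i\cup\{\,Khx : Kh\in S_i,\ x\in X\,\}. \]

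The second step is simply to observe that the right-hand side depends only on $S_i$. Hence if ever $S_i=S_{i+1}$, then $S_{i+1}=S_{i+2}$, and by induction $S_j=S_i$ for all $j\ge i$: once the chain stops growing, it is constant thereafter. The third step is to note that $\bigcup_{i\ge 0}S_i$ is the entire coset space, because any $b\in H$ is a finite product of elements of $X$, so $l_X(b)<\infty$ and $Kb\in S_{l_X(b)}$. Combining these two observations, the only way the chain can stabilize is at the full coset space, and it must stabilize since it is bounded by the $m$ cosets.

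It remains to count: we start from $|S_0|=1$, each strict inclusion increases the cardinality by at least one, and the chain must reach cardinality $m$. Therefore $S_{m-1}$ is already the whole coset space, which is exactly the assertion that every coset $Kb$ contains an element $g$ with $l_X(g)\le m-1$.

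I do not expect any genuine obstacle here: the only points needing a little care are the bookkeeping in the recursion for $S_{i+1}$ — this is where $X=X^{-1}$ (so that a length-minimal expression can be peeled from the right) and the fact that $X$ generates $H$ are actually used — and the elementary remark that a strictly increasing chain of subsets of an $m$-element set, starting from a singleton, exhausts the set in at most $m-1$ steps.
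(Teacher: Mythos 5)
Your argument is correct and is essentially the standard proof of this fact (the paper itself does not reprove it but cites Lemma 2.1 of Dierings--Shumyatsky, whose argument is exactly this: the increasing chain of sets of cosets reachable by words of length at most $i$ must grow strictly until it exhausts the $m$ cosets, hence stabilizes at the whole coset space by step $m-1$). The two points you flag --- well-definedness of the coset recursion and the use of $X=X^{-1}$ to peel a generator off a minimal expression --- are indeed the only places where care is needed, and you handle them correctly.
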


In the sequel the above lemma will be used in the situation where $H=\gamma_n(G)$ and $X=X_n$ is the set of $\gamma_n$-values in $G$. Therefore we will write $l(g)$ to denote the smallest number such that the element 
 $g\in \gamma_n(G)$
 can be written as a product of as many $\gamma_n$-values.

Recall that if $G$ is a group, $a\in G$ and $H$ is a subgroup of $G$, then $[H,a]$ denotes the subgroup of $G$ generated by all commutators of the form $[h,a]$, where $h\in H$. It is well-known that $[H,a]$ is normalized by $a$ and $H$.
\begin{lemma} \label{2.3} 
Let $k,m,n\ge2$ and let $G$ be  a group in which $|x^{\gamma_n(G)}|\le m$ for any $x\in G$. Suppose that $[\gamma_k(G),\gamma_{n}(G)]$ is finite. 
 Then for every $x\in\gamma_{k-1}(G)$ the order of $[\gamma_{n}(G),x]$ is bounded in terms of $m$, $n$ and $|[\gamma_k(G),\gamma_{n}(G)]|$ only. 
\end{lemma}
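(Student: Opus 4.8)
The plan is to observe that, modulo the finite subgroup $N:=[\gamma_k(G),\gamma_n(G)]$, the subgroup $K:=[\gamma_n(G),x]$ lies in the centre of $\gamma_n(G)/N$; this turns the commutator map $g\mapsto[g,x]$ into a group homomorphism of $\gamma_n(G)$, and then the hypothesis $|x^{\gamma_n(G)}|\le m$ bounds the size of its image.

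First I would pin down where $K$ lives. Since $x\in\gamma_{k-1}(G)$, every generator $[h,x]$ of $K$ lies in $[\gamma_n(G),\gamma_{k-1}(G)]\le\gamma_{n+k-1}(G)\le\gamma_k(G)$ (only $n\ge1$ is used for the last inclusion), so $K\le\gamma_k(G)$ and hence $[K,\gamma_n(G)]\le[\gamma_k(G),\gamma_n(G)]=N$. At the same time $K=[\gamma_n(G),x]$ is normalized by $\gamma_n(G)$, so $[K,\gamma_n(G)]\le K$. Setting $M:=K\cap N$ we therefore have $[K,\gamma_n(G)]\le M$ and $|M|\le|N|$; in particular $K/M$ is abelian and central in $\gamma_n(G)/M$.

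Next I would check that $\phi\colon\gamma_n(G)\to K/M$, $g\mapsto[g,x]M$, is a surjective group homomorphism. Surjectivity is immediate because the elements $[g,x]$ generate $K$. For the homomorphism property one combines the identity $[g_1g_2,x]=[g_1,x]^{g_2}[g_2,x]$ with $[g_1,x]^{g_2}=[g_1,x]\,[[g_1,x],g_2]$ and the fact that $[[g_1,x],g_2]\in[K,\gamma_n(G)]\le M$, obtaining $[g_1g_2,x]M=[g_1,x]M\cdot[g_2,x]M$. Moreover $\ker\phi$ contains $C_{\gamma_n(G)}(x)$, since $[g,x]=1$ whenever $g$ commutes with $x$. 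As $\ker\phi$ is a subgroup of $\gamma_n(G)$ containing $C_{\gamma_n(G)}(x)$, this gives
\[
|K/M|=|\phi(\gamma_n(G))|=[\gamma_n(G):\ker\phi]\le[\gamma_n(G):C_{\gamma_n(G)}(x)]=|x^{\gamma_n(G)}|\le m,
\]
and therefore $|K|=|K/M|\cdot|M|\le m\,|N|$, which is a bound of exactly the required kind.

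The computation is short, and the only point that needs care is the bookkeeping with the lower central series in the first step — above all the inclusion $K\le\gamma_k(G)$, on which the whole argument hinges. I do not expect to need Lemma \ref{conj2} or Lemma \ref{2.1g} at this stage; those look tailored to the later proof of Theorem \ref{main}, where one has to handle genuinely infinite generating sets rather than a single element $x$.
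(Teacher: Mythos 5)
Your proof is correct, and it takes a genuinely different and more elementary route than the paper's. The paper reduces to the case $[\gamma_k(G),\gamma_n(G)]=1$, uses Lemma \ref{2.1g} to produce boundedly many $\gamma_n$-values $y_{ij}$ of bounded length so that $[\gamma_n(G),x]\le T'$ for $T=\langle x,y_{ij}\rangle$, shows via Lemma \ref{conj2} that the centre of $T$ has $(m,n)$-bounded index, and then invokes Schur's theorem. You instead exploit the single inclusion $[\gamma_n(G),x]\le[\gamma_n(G),\gamma_{k-1}(G)]\le\gamma_{n+k-1}(G)\le\gamma_k(G)$, which forces $[K,\gamma_n(G)]$ into the finite subgroup $N=[\gamma_k(G),\gamma_n(G)]$; modulo $M=K\cap N$ the map $g\mapsto[g,x]$ becomes a homomorphism of $\gamma_n(G)$ onto $K/M$ whose kernel contains $C_{\gamma_n(G)}(x)$, of index at most $m$. (The one point that needs care --- that $[[g_1,x],g_2]$ lies in $K\cap N$ and not merely in $N$ --- you cover by noting that $K$ is normalized by $\gamma_n(G)$.) Your argument buys an explicit bound $|[\gamma_n(G),x]|\le m\,|N|$ that does not depend on $n$, dispenses with Schur's theorem, Lemma \ref{conj2} and Lemma \ref{2.1g} entirely, handles $k\ge n+1$ and $k\le n$ uniformly without the separate appeal to Neumann's theorem, and uses the hypothesis $|g^{\gamma_n(G)}|\le m$ only for the single element $g=x$ under consideration, so it in fact proves a slightly stronger statement.
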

\begin{proof} By Neumann's theorem $\gamma_n(G)'$ has $m$-bounded order, so the statement is true for $k\ge n+1$. 
Therefore we deal with the case $k\le n$. Without loss of generality we can assume that $[\gamma_k(G), \gamma_{n}(G)]=1$.

Let  $x \in \gamma_{k-1}(G)$. Since $|x^{\gamma_n(G)}|\le m$, the index of $C_{\gamma_n(G)}(x)$ in $\gamma_n(G)$ is at most $m$ 
 and  by Lemma \ref{2.1g} we can choose elements $y_1,\dots,y_m \in X_n$
 such that $l(y_i)\le m-1$ and $[\gamma_n(G),x]$ is generated by the commutators $[y_{i},x]$. For each
$i=1,\dots,m$ write $y_i=y_{i\,1}\cdots y_{i\,{m-1}}$, where $y_{i\,j}\in X_n$. The standard commutator identities show that $[y_i,x]$ can be written as a product of conjugates in $\gamma_n(G)$ of the commutators $[y_{ij} ,x]$. 
Since $[y_{ij} ,x] \in \gamma_{k}(G)$, for any $z \in \gamma_n(G)$ we have 
 that 
 \[[[y_{ij} ,x], z] \in [\gamma_k(G), \gamma_{n}(G)]=1.\]
  Therefore $[y_i,x]$ can be written as a product of the commutators $[y_{ij} ,x]$.
 
 Let $ T = \langle x,y_{ij} \mid 1\le i, j \le m \rangle$. It is clear that $[\gamma_n(G), x]\le T'$ and
so it is sufficient to show that $T'$ has finite $(m,n)$-bounded order. 
 Observe that $T \le \gamma_{k-1}(G)$. By Lemma \ref{conj2}, $C_{\gamma_{k-1}(G)}(y_{ij})$ has $(m,n)$-bounded index in $\gamma_{k-1}(G)$. It follows that $C_{T}(\{y_{ij}\mid1\le i,j\le m \})$ has $(m,n)$-bounded index in $T$. Moreover, $T\le\langle x\rangle\gamma_n(G)$ and $|x^{\gamma_n(G)}|\le m$, whence $|T:C_T(x)|\le m$.  Therefore the centre of $T$ has $(m,n)$-bounded index in $T$. Thus, Schur's theorem \cite[10.1.4]{robinson} tells us that $T'$ has finite $(m,n)$-bounded order, as required.
\end{proof}

The next lemma can be seen as a development related to Lemma 2.4 in \cite{dieshu} and Lemma 4.5 in \cite{wie}. It plays a central role in our arguments.  

\begin{lemma} \label{basic-light}  
Let $k,n\ge2$. Assume that $|x^{\gamma_n(G)}|\le m$ for any $x\in G$. Suppose that $[\gamma_k(G),\gamma_{n}(G)]$ is finite. Then the order of $[\gamma_{k-1}(G),\gamma_{n}(G)]$  is bounded in terms of $m$, $n$ and $|[\gamma_k(G),\gamma_{n}(G)]|$ only.
\end{lemma}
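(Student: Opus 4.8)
The plan is to reduce Lemma~\ref{basic-light} to Lemma~\ref{2.3} by the standard device of controlling the verbal subgroup $[\gamma_{k-1}(G),\gamma_n(G)]$ through finitely many ``pieces'' of bounded order. As in the proof of Lemma~\ref{2.3}, we may assume without loss of generality that $[\gamma_k(G),\gamma_n(G)]=1$, since we are allowed a bound depending on $|[\gamma_k(G),\gamma_n(G)]|$. By Neumann's theorem the case $k\ge n+1$ is immediate (then $[\gamma_{k-1}(G),\gamma_n(G)]\le \gamma_n(G)'$ has $m$-bounded order), so we assume $2\le k\le n$. Write $N=\gamma_n(G)$ and $M=\gamma_{k-1}(G)$; note $N\trianglelefteq G$, $[M,N]\le\gamma_k(G)$ (since $[\gamma_{k-1}(G),\gamma_n(G)]\le\gamma_{n+k-1}(G)\le\gamma_k(G)$ as $n\ge1$, indeed $\le\gamma_{n+1}(G)$), and crucially $[[M,N],N]\le[\gamma_k(G),\gamma_n(G)]=1$, so $[M,N]$ is central in $N$.

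The key step is to find a \emph{bounded} generating set for $N$ modulo $C_N(M)$, or rather to pin down $[M,N]$ using boundedly many generators of $M$ of bounded length. First I would apply Lemma~\ref{2.1g} to the group $N=\langle X_n\rangle$ and the subgroup $C_N(x)$ for a fixed $x\in M$: since $|x^N|\le m$, we get $m$ generators of $[N,x]$ that are $\gamma_n$-values of length $\le m-1$, and then, exactly as in Lemma~\ref{2.3}, the fact that $[[y_{ij},x],N]=1$ lets us replace conjugates by plain products, so $[N,x]$ is generated by the $\gamma_n$-values $[y_{ij},x]$ with $1\le i,j\le m$. Now one wants the analogous statement ``from the other side'': reduce the number of relevant $x\in M$. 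For this I would pass to the quotient and use that $[M,N]$ is a (central, hence abelian) subgroup on which $M$ acts, and $[M,N]=\langle [x,n]\mid x\in M, n\in X_n\rangle$; since $|n^M|\le$ something bounded — here I would invoke Lemma~\ref{conj2}, which under our hypotheses (with $N=1$ there being $[\gamma_k(G),\gamma_n(G)]$) gives that $C_M(n)=C_{\gamma_{k-1}(G)}(n)$ has $(m,n)$-bounded index in $M=\gamma_{k-1}(G)$ for every $n\in X_n$ — one concludes that $[M,n]$ is generated by boundedly many commutators $[x_s,n]$ with the $x_s$ in bounded number and of bounded length as products of $\gamma_{k-1}$-values, and that these $x_s$ can be chosen uniformly, independent of $n$.

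Putting the two reductions together, I would exhibit a finite subset $S\subseteq M$ of $(m,n)$-bounded size such that $[M,N]=\langle [x,n]\mid x\in S,\ n\in X_n\rangle$ — in fact $[M,N]=\prod_{x\in S}[N,x]$ because $[M,N]$ is abelian and each $[N,x]$ is a subgroup normalized appropriately. Each factor $[N,x]$ with $x\in\gamma_{k-1}(G)$ has $(m,n)$-bounded order by Lemma~\ref{2.3} (applied in the quotient where $[\gamma_k(G),\gamma_n(G)]=1$, so with that parameter equal to $1$). Since $[M,N]$ is generated by boundedly many subgroups each of bounded order, and the ambient relevant section is abelian, $|[M,N]|$ is $(m,n)$-bounded, which is the claim.

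The main obstacle I anticipate is the uniform choice of the finite set $S\subseteq\gamma_{k-1}(G)$ generating $[M,N]$: one must make sure the generators $x_s$ of bounded $\gamma_{k-1}$-length work simultaneously for all $n\in X_n$, not just one $n$ at a time, and that finitely many of them suffice to generate the whole verbal subgroup $[\gamma_{k-1}(G),\gamma_n(G)]$ rather than just one $[N,x]$. The trick, as in Lemma 2.4 of \cite{dieshu} and Lemma 4.5 of \cite{wie}, is to work inside the finitely generated subgroup $T=\langle x_s, y_{ij}\rangle$ produced by the two length bounds, show $[M,N]\cap T$ already captures everything by a centrality/finiteness argument, and then apply Lemma~\ref{2.3} together with Schur-type bounds inside $T$; verifying that this $T$ has $(m,n)$-bounded derived length data and that $[M,N]\le T'$ is where the care is needed.
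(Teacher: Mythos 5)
Your overall architecture matches the paper's: reduce to the case $[\gamma_k(G),\gamma_n(G)]=1$, write $[\gamma_{k-1}(G),\gamma_n(G)]$ as a product of boundedly many subgroups of the form $[\gamma_n(G),x]$, and bound each factor by Lemma~\ref{2.3}. The easy half of that decomposition is also fine: if $M$ is any subgroup of $(m,n)$-bounded index $s$ in $K=\gamma_{k-1}(G)$ with coset representatives $x_1,\dots,x_s$, then $[\gamma_n(G),K]\le [\gamma_n(G),x_1]\cdots[\gamma_n(G),x_s][\gamma_n(G),M]$. But the entire content of the lemma is in handling the remaining factor $[\gamma_n(G),M]$, and that is exactly the step you leave open. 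Your proposed mechanism --- use Lemma~\ref{conj2} to get that $C_M(y)$ has bounded index for each $y\in X_n$, and then choose generators $x_s$ ``uniformly, independent of $n$'' --- does not work as stated: the coset representatives of $C_M(y)$ in $M$ depend on $y$, and $X_n$ is infinite, so no finite set $S$ with $[M,\gamma_n(G)]=\prod_{x\in S}[\gamma_n(G),x]$ falls out of this. You flag this as ``the main obstacle'' and defer to ``the trick as in Lemma 2.4 of [Dierings--Shumyatsky] and Lemma 4.5 of [Wiegold]'', but the sketch you give of that trick (work inside a finitely generated $T$, show $[M,N]\cap T$ captures everything, apply Schur) is not the argument that closes the gap.

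The missing idea is Neumann's maximal-class argument. Choose $a\in X_{k-1}$ with $r=|a^{W}|$ maximal among $X_{k-1}$-values, where $W=\gamma_n(G)$; by Lemma~\ref{2.1g} realize the class as $a^W=\{a^{b_1},\dots,a^{b_r}\}$ with $l(b_i)\le m-1$, and take $M$ to be the normal core in $K$ of $C_K(\langle b_1,\dots,b_r\rangle)$, which has $(m,n)$-bounded index by Lemma~\ref{conj2}. For $v\in M$ one has $(va)^{b_i}=va^{b_i}$, and these $r$ distinct elements already exhaust the class $(va)^W$ by maximality of $r$; hence every conjugate $(va)^h$ equals some $va^{b}$, which forces $[h,v]=a^ha^{-b}$ and therefore $[W,v]^a\le [W,a]$, i.e.\ $[W,M]\le[W,a]$, a single factor to which Lemma~\ref{2.3} applies. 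Without this (or a genuine substitute), your argument bounds only the $s$ ``coset'' factors and says nothing about $[W,M]$, so the proof is incomplete at its central point. (Two minor further cautions if you pursue this: the maximality must be arranged so that it applies to the elements $va$, and your parenthetical claim that each $[N,x]$ is a $\gamma_n$-value-generated subgroup whose conjugates can be ``replaced by plain products'' needs the centrality $[[y,x],N]=1$, which you do correctly note.)
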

\begin{proof} Without loss of generality we can assume that $[\gamma_k(G), \gamma_{n}(G)]=1$.
Let $W=\gamma_n(G)$. Choose an element $a\in X_{k-1}$ such that the number of conjugates of $a$ in $W$ is maximal possible, that is,
$r=|a^W|\ge|g^W|$ for all $g\in X_{k-1}$.

 By Lemma \ref{2.1g}  we can choose $b_1,\dots, b_r\in W$ such that $l(b_i) \le m-1$ and $a^W = \{a^{b_i} | i = 1, \dots, r\}$.
%
%
 Let $K=\gamma_{k-1}(G)$. 
 Set $M=(C_K(\langle b_1 ,\dots, b_r\rangle ))_K$ (i.e. $M$ is the intersection of  all $K$-conjugates of $C_K(\langle b_1,\dots,b_r\rangle)$).
 Since $l(b_i) \le m-1$ and, by Lemma \ref{conj2}, 
  $C_K(x)$ has $(m,n)$-bounded index
    in $K$ for each  $x\in X_n$, 
 the subgroup  $C_K(\langle b_1, \dots , b_r\rangle)$ has $(m,n)$-bounded index in $K$,  so also $M$ has $(m,n)$-bounded index in $K$.

Let $v\in M$. Note that  $(va)^{b_i} = va^{b_i}$ for each $i =1,\dots, r$. Therefore the elements $va^{b_i}$ form the conjugacy class $(va)^W$ because they are all different and their number is the allowed maximum. So, for an arbitrary element $h\in W$ there exists $b\in\{b_1 ,\dots, b_r\}$ such that
$(va)^h= va^b$ and hence $v^h a^h = va^b$. Therefore $[h, v] = v^{-h}v=a^h a^{-b}$ and so $[h, v]^a =a^{-1} a^h a^{-b} a = [a,h][b,a] \in [W,a].$
Thus $[W,v]^a \le [W,a]$ and  so $[W,M]\le [W,a].$

Let $x_1, \dots , x_s$  be a set of coset representatives of $M$ in $K$. As  $[W,x_i]$ is normalized by $W$ for each $i$, it follows that
\[ [W, K] \le [W,x_1] \cdots [W,x_s] [W,M] \le  [W,x_1] \cdots [W,x_s] [W,a] .\] 
Since  $s$ is $(m,n)$-bounded and by Lemma \ref{2.3} the orders of all subgroups  $[W,x_i]$ and 
 $[W,a]$ are bounded in terms of $m$ and $n$  only, 
 the result follows.   
\end{proof}

\begin{proof}[Proof of Theorem \ref{main}.] Let $G$ be a group in which $|x^{\gamma_n(G)}|\le m$ for any $x\in G$. We need to show that $\gamma_{n+1}(G)$ has finite $(m,n)$-bounded order. 
We will show that the order of $[\gamma_k(G),\gamma_{n}(G)]$ is finite and $(m,n)$-bounded for $k=n,n-1,\dots,1$. 
This is sufficient for our purposes since $[\gamma_1(G),\gamma_{n}(G)]=\gamma_{n+1}(G)$. We argue by backward induction on $k$. The case $k=n$ is immediate from Neumann's theorem so we assume that $k\le n-1$ and the order of $[\gamma_{k+1}(G),\gamma_{n}(G)]$ 
is finite and $(m,n)$-bounded. 
 Lemma \ref{basic-light} now shows that also the order of $[\gamma_k(G),\gamma_{n}(G)]$ is finite and $(m,n)$-bounded, as required.
\end{proof}

\begin{proof}[Proof of Corollary \ref{gamma}.] Let $G$ be a group in which $|x^{X_n(G)}|\le m$ for any $x\in G$. We wish to show that $\gamma_{n+1}(G)$ has finite $(m,n)$-bounded order. Theorem 1.2 of \cite{brakrashu} tells us that $|x^{\gamma_n(G)}|$ is $(m,n)$-bounded. The result is now immediate from Theorem \ref{main}.
\end{proof}

\begin{proof}[Proof of Theorem \ref{char}.] In view of Corollary \ref{gamma} the theorem is self-evident since a group $G$ is finite-by-nilpotent if and only if some term of the lower central series of $G$ is finite.
\end{proof}


\begin{thebibliography}{99}

\bibitem{brakrashu} S.\ Brazil, A.\ Krasilnikov and P.\ Shumyatsky, {\it Groups with bounded verbal conjugacy classes}, J. Group Theory {\bf 9} (2006), 127--137.

\bibitem{dms17} E.\ Detomi, M.\ Morigi,  P.\ Shumyatsky, {\it BFC-theorems for higher commutator subgroups}, Q. J. Math., doi: 10.1093/qmath/hay068.

\bibitem{dieshu} G. Dierings, P. Shumyatsky, {\it Groups with Boundedly Finite Conjugacy Classes of Commutators},  Q. J. Math., {\bf 69} (2018), 1047--1051, doi: 10.1093/qmath/hay014.

\bibitem{dfs} S.\ Franciosi, F.\ de Giovanni and P.\ Shumyatsky,{\it  On groups with finite verbal conjugacy classes}, Houston J. Math. 28 (2002), 683--689.
\bibitem{gumaroti} R. M. Guralnick, A. Maroti, {\it Average dimension of fixed point spaces with applications}, J. Algebra, 226 (2011), 298--308.

\bibitem{bhn}  B. H. Neumann, {\it Groups covered by permutable subsets}. J. London Math. Soc. 29, (1954). 236--248. 

\bibitem{neuvoe} P. M. Neumann, M.R. Vaughan-Lee, {\it An essay on BFC groups }, Proc. Lond.
Math. Soc. 35 (1977), 213--237.

\bibitem{robinson} D. J. S. Robinson,  A course in the theory of groups, Second edition. Graduate Texts in Mathematics, 80. Springer-Verlag, New York, 1996.

\bibitem{sesha} D. Segal, A. Shalev, {\it On groups with bounded conjugacy classes}, Quart. J.
Math. Oxford 50 (1999), 505--516.

\bibitem{wie} J. Wiegold, {\it Groups with boundedly finite classes of conjugate elements}, Proc. Roy. Soc. London Ser. A 238 (1957), 389--401.

\end{thebibliography}
\end{document}